\newcommand{\PP}{\mathbb{P}}
\newcommand{\Q}{\mathbb{Q}}
\newcommand{\sym}{\mathfrak{S}}
\newcommand{\calC}{\mathcal{C}}
\DeclareMathOperator{\Gal}{Gal}
\numberwithin{equation}{section}
\newtheorem{theorem}{Th\'eor\`eme}
\newtheorem{corollary}{Corollaire}
\newtheorem{proposition}{Proposition}
\theoremstyle{definition}
\newtheorem{question}[equation]{Question}
\theoremstyle{remark}
\newtheorem{remark}[equation]{Remarque}
\definecolor{darkgreen}{rgb}{0,0.5,0}
\begin{document}

\title{Points totalement r\'eels de la courbe $x^5+y^5+z^5=0$ }

\author{Alain Kraus}

\begin{abstract}  Soient  $\overline \Q$ une cl\^oture alg\'ebrique de $\Q$ et $\Q^{tr}$ le sous-corps de $\overline \Q$ form\'e de la r\'eunion des corps de nombres totalement r\'eels. Pour tout nombre premier $p\geq 3$, soit $F_p/\Q$ la courbe de Fermat d'\'equation $x^p+y^p+z^p=0$.  En 1996, Pop a d\'emontr\'e  que  le corps $\Q^{tr}$ est large. En particulier, l'ensemble $F_p(\Q^{tr})$ des points de $F_p$ rationnels sur $\Q^{tr}$ est infini.  Comment expliciter des points non triviaux ($xyz\neq 0$) de $F_p(\Q^{tr})$ ?
Si on a $p\geq 5$, il semble que   les seuls points d\'ej\`a connus de $F_p(\Q^{tr})$ soient ceux de $F_p(\Q)$ et ils sont triviaux. 
Dans cet article,  on s'int\'eresse  \`a cette question dans le  cas  o\`u $p=5$.   Il n'existe pas  de corps totalement r\'eels de degr\'e sur $\Q$ au plus $5$ sur lesquels $F_5$ a des points non triviaux.
 On se propose ici  d'expliciter  une infinit\'e de points  de $F_5$   rationnels sur des corps totalement r\'eels de degr\'e $6$ sur $\Q$.

  \end{abstract}
\bigskip

\date{\today}

\address{Sorbonne Universit\'e,
Institut de Math\'ematiques de Jussieu - Paris Rive Gauche,
UMR 7586 CNRS - Paris Diderot,
4 Place Jussieu, 75005 Paris, 
France}
\email{alain.kraus@imj-prg.fr}

\renewcommand{\keywordsname}{Mots-cl\'es}
\keywords{\'Equation de Fermat - Corps de nombres - Points totalement r\'eels - Coniques.}

\makeatletter
\@namedef{subjclassname@1991}{MSC 2010}
\makeatother
\subjclass{11D41 - 11Y40 - 12F05.}

 \maketitle

\renewcommand{\abstractname}{Abstract}
\begin{abstract}   Let $\overline \Q$ be an algebraic closure of $\Q$ and $\Q^{tr}$  be the subfield of $\overline \Q$ obtained by taking the union of all totally real number fields.
For any prime $p\geq 3$, let $F_p/\Q$ be the Fermat curve of equation $x^p+y^p+z^p=0$. In 1996, Pop has shown that the field $\Q^{tr}$ is large. In particular, the set $F_p(\Q^{tr})$  of the points of $F_p$  rational over $\Q^{tr}$ is infinite. How to explicit non-trivial points $(xyz\neq 0$) in $F_p(\Q^{tr})$ ? If one has $p\geq 5$, it seems that the only  points   already known in  $F_p(\Q^{tr})$ are those of $F_p(\Q)$ and they are trivial. In this paper, we investigate this question in case $p=5$. There  are no totally real fields whose degree over $\Q$ is at most $5$ over which $F_5$ has non-trivial points. We propose here to explicit infinitely many points of $F_5$  rational over totally real fields of degree $6$ over $\Q$. 
\end{abstract}

\section{Introduction}   
Soit $p\geq 3$ un nombre premier. Notons $F_p/\Q$ la courbe de Fermat d'\'equation
$$x^p+y^p+z^p=0.$$ 
Un point $[x,y,z]$ de $F_p(\overline \Q)$ est dit non trivial si $xyz\neq 0$. 

Notons  $\Q^{tr}$ la r\'eunion  des corps de nombres totalement r\'eels dans $\overline \Q$. Wiles a \'etabli en 1994 que $F_p(\Q)$ est r\'eduit aux points triviaux (\cite{Wiles}). Depuis, 
il a \'et\'e d\'emontr\'e, 
pour des familles infinies de  corps  $K$ totalement r\'eels,  que  $F_p(K)$ est r\'eduit aux points triviaux 
si  $p$ est assez grand fonction de $K$  (voir par exemple \cite{FKS}). 
Cela \'etant, 
le corps $\Q^{tr}$ est large (\cite{Pop}, page 2) et $F_p(\Q)$ n'est pas vide. Par suite,   l'ensemble  $F_p(\Q^{tr})$ est infini. Le degr\'e  d'un point de $F_p$ \'etant  le degr\'e sur $\Q$ de son corps de d\'efinition, 
cela sugg\`ere la question suivante:

\begin{question} \label{Q:que1.1} 
Quel est le plus petit entier $d$ tel que $F_p(\Q^{tr})$ contienne des points non triviaux de  degr\'e $d$ \  ?
\end{question} 


Si $p=3$, on a $d=2$. En effet, $F_3$ est une courbe elliptique, $F_3(\Q)$ est r\'eduit aux points triviaux et par exemple 
$[18+17\sqrt{2},18-17\sqrt{2},-42]$ est un point de $F_3$ rationnel sur $\Q(\sqrt{2})$. 
En utilisant~\cite{MAGMA}, on v\'erifie  que  c'est  un point  d'ordre infini de $F_3$.

\`A ma connaissance, si on a   $p\geq 5$, aucun point non trivial de $F_p(\Q^{tr})$  n'a d\'ej\`a  \'et\'e explicit\'e dans la litt\'erature. On d\'emontre dans cet article  que pour $p=5$ on a  $d=6$,  et on explicite une infinit\'e de points totalement r\'eels de degr\'e $6$ sur $F_5$. 
Si on a $p\geq 7$, la question \ref{Q:que1.1}  semble    ouverte. Pour $p=7$, signalons qu'en utilisant les travaux de Sall et Tzermias dans \cite{Sall} et \cite{Tzermias}, on peut \'etablir avec  des arguments analogues \`a ceux \'evoqu\'es   ci-dessous  que l'on a $d\geq 10$.

Ce qui pr\'ec\`ede sugg\`ere aussi le  probl\`eme suivant. Comment d\'emontrer qu'il existe un entier $n$ tel que $F_p(\Q^{tr})$ contienne une infinit\'e de points de degr\'e $n$, si tel est le cas ?
Pour $p=3$ (resp. $p=5$) un tel entier existe   et le plus petit d'entre eux est $n=2$ (resp. $n=6$).  
Si on a $p\geq 7$, ce probl\`eme semble non r\'esolu.

Supposons d\'esormais $p=5$.  Soit $\zeta_3$ une racine primitive cubique de l'unit\'e. D'apr\`es les travaux de  Gross et   Rohrlich,  les seuls points quadratiques de $F_5$ sont     (\cite{GR}, Theorem 5.1)
 \begin{equation}
  \label{(1.1)}
 P=[\zeta_3,\zeta_3^2,1]\quad \text{et}\quad \overline P=[\zeta_3^2,\zeta_3,1].
 \end{equation}
 Par ailleurs,  Klassen et Tzermias ont d\'emontr\'e qu'il n'existe pas de points cubiques sur  $F_5$,
 et que 
 les points de $F_5$ de degr\'e 
 $4$ ou $5$   s'obtiennent comme l'intersection de $F_5$ avec une droite d\'efinie sur $\Q$  (\cite{KlassenTzermias}, Theorem 1).

On en d\'eduit que l'on a $d\geq 6$ i.e. qu'il n'existe  pas de corps 
 totalement r\'eels,   de degr\'e   au plus  $5$ sur $\Q$, sur lesquels $F_5$ a des points non triviaux. En effet, 
 supposons qu'il existe un point non trivial $A=[x,y,1]\in F_5(\Q^{tr})$  de degr\'e au plus  $5$. D'apr\`es les r\'esultats rappel\'es ci-dessus, le degr\'e de $A$ est $4$ ou $5$. Quitte \`a permuter $x$ et $y$, il existe donc des nombres rationnels $a$ et $b$ tels que $y=ax+b$. En posant $F=X^5+(aX+b)^5+1\in \Q[X]$, on a ainsi 
 $F(x)=0$ et $\Q(x)$ est le corps de d\'efinition de $A$. Or  on v\'erifie directement que $F$ poss\`ede  au plus trois racines r\'eelles, d'o\`u une contradiction et notre assertion. 

Klassen et Tzermias  ont aussi d\'ecrit g\'eom\'etriquement   les points de $F_5$ de degr\'e $6$.
Ils \'etablissent que 
ces points  s'obtiennent comme l'intersection de $F_5$ avec quatre familles de  coniques planes sur $\Q$ (\cite{KlassenTzermias}, Theorem 1).
On d\'ecrit dans le paragraphe \ref{3}
la famille des coniques planes  sur $\Q$, irr\'eductibles sur $\Q$, passant par  $P$ et  ayant comme  tangente en  $P$ celle de $F_5$ en $P$. 
En d\'eterminant l'intersection  de ces coniques avec $F_5$, 
on d\'emontre   qu'il existe une infinit\'e de corps 
totalement r\'eels, galoisiens sur $\Q$ de degr\'e $6$, de groupe de Galois   isomorphe \`a $\sym_3$,   sur lesquels $F_5$    a un point non trivial.

Tous les calculs num\'eriques que ce travail a n\'ecessit\'es ont \'et\'e effectu\'es \`a l'aide des logiciels de calculs {\tt Pari-gp} (\cite{Pari}) et {\tt Magma} (\cite{MAGMA}).
Il se trouve dans  \cite{FK}, un fichier  {\tt Magma} qui a \'et\'e \'ecrit par Nuno Freitas, ainsi qu'un fichier  {\tt Pari-gp}, 
permettant de v\'erifier  ces calculs.

{\bf{Remerciements.}} Je remercie vivement Nicolas Billerey et Nuno Freitas pour   les remarques  dont ils m'ont  fait part au cours de  ce travail,  ainsi que pour l'aide informatique qu'ils m'ont apport\'ee. Je remercie \'egalement Dominique Bernardi qui a r\'ealis\'e la figure intervenant dans l'exemple du paragraphe \ref{2}, ainsi 
que 
le rapporteur de cet article pour  tous les commentaires  tr\`es instructifs qu'il m'a communiqu\'es et qui ont am\'elior\'e la premi\`ere version 
de ce travail.

\section{\'Enonc\'e des r\'esultats \label{2}}
Dans toute la suite,  la lettre  $t$  d\'esigne un nombre rationnel distinct de $2$. Posons 

$$u=\frac{3t^2 - 2t + 2}{t^2 + t - 1},\quad v=\frac{t^5 - 5t^4 + 10t^3 - 20t^2 + 15t - 7}{(t-2)(t^2+t-1)^2},$$

$$w=\frac{-3t^5 + 10t^4 - 20t^3 + 20t^2 - 20t + 6}{(t-2)(t^2+t-1)^2}.$$

Notons $f_t$ le polyn\^ome de $\Q[X]$ d\'efini par l'\'egalit\'e
\begin{equation}
 \label{(2.1)}
 f_t=X^6+uX^5+vX^4+wX^3+vX^2+uX+1.
\end{equation}

Posons par ailleurs 
$$s=(t^4 - 3t^3 - t^2 + 3t + 1)(t-2) \quad \text{(on a }\ s\neq 0),$$
$$a_0=-\frac{(t^2 + 1)(t^3 - t^2 + 2t - 3)}{s},
\quad a_1=-\frac{3t^7 - 9t^6 + 16t^5 - 15t^4 + 10t^3 - 11t^2 + 8t - 7}{(t^2 + t - 1)s},$$

$$a_2=\frac{2t^8 - 14t^7 + 52t^6 - 99t^5 + 100t^4 - 54t^3 + 38t^2 - 44t + 13}{(t^2 + t - 1)(t-2)s},$$

$$a_3=\frac{t^8 + t^7 - 21t^6 + 65t^5 - 90t^4 + 78t^3 - 57t^2 + 32t - 15}{(t^2+t-1)(t-2)s},$$

$$a_4=-\frac{2t^5 - 6t^4 + 13t^3 - 14t^2 + 7t - 5}{s},\quad 
a_5=-\frac{(t^2 + t - 1)(t^3 - t^2 + 2t - 3)}{s}.$$

D\'esignons par $K_t$ le corps de d\'ecomposition de $f_t$ dans $\overline {\Q}$. Soit $\alpha\in K_t$ une racine de $f_t$. Posons 
\begin{equation}
 \label{(2.2)}
\beta=\sum_{i=0}^{5} a_i\alpha^i
\end{equation}

\begin{theorem}    \label{T:thm1}  Supposons $t\neq 1$. 

\begin{itemize}

\item[1)]  Le polyn\^ome $f_t\in \Q[X]$ est irr\'eductible sur $\Q$. 

\item[2)] L'ensemble des six racines de $f_t$ dans $K_t$ est 
$$\big\lbrace \alpha, \beta,\beta/\alpha,1/\alpha,1/\beta,\alpha/\beta\big\rbrace.$$
 En particulier, on a $K_t=\Q(\alpha)$ et 
 l'extension $\Q(\alpha)/\Q$ est galoisienne  de degr\'e $6$. Son  groupe de Galois est isomorphe \`a $\sym_3$.

\item[3)]   Les points
$$[\alpha,\beta,1],\quad [1/\alpha,\beta/\alpha, 1],\quad[1/\beta,\alpha/\beta,1],$$
et ceux obtenus en permutant leurs deux premi\`eres coordonn\'ees,  
appartiennent \`a  $F_5(K_t)$. Ils  sont distincts et non triviaux. L'ensemble de ces  points est  l'orbite galoisienne et la $\sym_3$-orbite de chacun d'eux.
 
\end{itemize} 
\end{theorem}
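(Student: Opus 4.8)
The entire argument should hinge on the fact that $f_t$ is \emph{reciprocal}: its coefficient string $1,u,v,w,v,u,1$ is palindromic, so $\xi$ is a root exactly when $1/\xi$ is, and one may write
\[
 f_t(X)=X^3\,g_t\!\left(X+\tfrac1X\right),\qquad g_t(Y)=Y^3+uY^2+(v-3)Y+(w-2u)\in\Q[Y].
\]
The six roots therefore fall into three inversion-pairs $\{\xi,1/\xi\}$, each attached to one root $\xi+1/\xi$ of the cubic $g_t$. Writing $P(X)=\sum_{i=0}^5 a_iX^i$, so that $\beta=P(\alpha)$, everything rests on a few polynomial identities in $\Q(t)[X]$, all to be confirmed by reduction modulo $f_t$ with Pari/Magma: that $f_t\circ P\equiv 0\pmod{f_t}$ (so $P$ carries roots to roots), that the induced map permutes the six roots by the involution $\alpha\leftrightarrow\beta$, $1/\alpha\leftrightarrow\beta/\alpha$, $1/\beta\leftrightarrow\alpha/\beta$ (concretely $P(\beta)\equiv\alpha$ and $P(1/\alpha)\equiv\beta/\alpha$), and the Fermat relation $X^5+P(X)^5+1\equiv 0\pmod{f_t}$, i.e.\ $\alpha^5+\beta^5+1=0$.

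For part (1) I would prove irreducibility through $g_t$. A cubic is irreducible over $\Q$ iff it has no rational root, so the first task is to check that $g_t$ has no rational root for every rational $t\neq 1,2$; granting this, $Y_1:=\alpha+1/\alpha$ generates a cubic field $\Q(Y_1)$. Since $\alpha$ satisfies $X^2-Y_1X+1=0$ over $\Q(Y_1)$, one has $[\Q(\alpha):\Q]=6$ precisely when $Y_1^2-4$ is not a square in $\Q(Y_1)$, and this follows from the rational norm computation
\[
 \Norm_{\Q(Y_1)/\Q}(Y_1^2-4)=g_t(2)\,g_t(-2)=-f_t(1)\,f_t(-1),
\]
once one checks that $-f_t(1)f_t(-1)$ is not a square in $\Q^\times$ (a square would have square norm). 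Thus, modulo the two non-squareness/no-root checks, $f_t$ is its own minimal polynomial and irreducible.

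For part (2), the displayed computational facts show that $\beta$ and $\beta/\alpha$ are roots, and reciprocity upgrades this to $1/\beta$ and $\alpha/\beta$; hence the six listed elements are roots of $f_t$, all lying in $\Q(\alpha)$. I would then verify they are pairwise distinct for $t\neq 1$: each possible coincidence ($\alpha=\pm1$, $\alpha=\beta$, $\beta=\beta/\alpha$, \dots) translates into a polynomial condition in $t$ whose only rational solution is $t=1$ — the degenerate value at which $f_1=(X^2+X+1)^3$ collapses to the Gross–Rohrlich quadratic point $P=[\zeta_3,\zeta_3^2,1]$. Consequently these are all six roots, $K_t=\Q(\alpha)$ is Galois of degree $6$ by part (1), and $\Gal(K_t/\Q)$ has order $6$. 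To identify it with $\sym_3$ it suffices to exhibit two automorphisms and show they do not commute: the involutions $\sigma\colon\alpha\mapsto\beta$ and $\tau\colon\alpha\mapsto 1/\alpha$ satisfy $\sigma\tau(\alpha)=1/\beta$ while $\tau\sigma(\alpha)=\beta/\alpha$, so the group is non-abelian of order $6$, hence $\sym_3$. (As an independent check, $\operatorname{disc} g_t$ is a non-square, so the cubic subfield $\Q(Y_1)$ is not normal, which already forces $\sym_3$.)

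For part (3), non-triviality is immediate since $f_t(0)=1$ forces every root, and every ratio of roots, to be nonzero; distinctness was settled in part (2). Membership in $F_5(K_t)$ reduces to the single identity $\alpha^5+\beta^5+1=0$: it is literal for $[\alpha,\beta,1]$, follows for $[1/\alpha,\beta/\alpha,1]$ and $[1/\beta,\alpha/\beta,1]$ after clearing denominators, and the swaps of the first two coordinates lie on $F_5$ by the symmetry of $x^5+y^5+z^5$. Finally, using $P(\alpha)=\beta$, $P(\beta)=\alpha$ and $P(1/\alpha)=\beta/\alpha$, the six points are exactly $\{[\rho,P(\rho),1]:\rho\text{ a root}\}$; since $P$ has rational coefficients, $g\cdot[\rho,P(\rho),1]=[g\rho,P(g\rho),1]$ for $g\in\Gal(K_t/\Q)$, so this set is Galois-stable and, the action on the first coordinates being simply transitive, forms a single orbit; this is simultaneously the Galois orbit and the $\sym_3$-orbit of each point. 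The main obstacle is the uniform control required for part (1): checking, for \emph{every} rational $t\neq 1,2$, that $g_t$ has no rational root and that $-f_t(1)f_t(-1)$ is not a square, so that no degeneration occurs apart from the Gross–Rohrlich value $t=1$. Morally this is why the excluded parameters are exactly $\{1,2\}$ — a genuine drop in degree would produce a non-trivial point of $F_5$ of small degree, severely constrained by the classifications of Wiles, Gross–Rohrlich and Klassen–Tzermias — but making the exclusion airtight for all rational $t$, rather than merely generically, is the delicate step.
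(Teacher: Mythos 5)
Your parts 2) and 3) are essentially the paper's own argument: the same computer-verified identities modulo $f_t$ (that $P$ maps roots to roots, $P(1/\alpha)=\beta/\alpha$, the Fermat relation $X^5+P(X)^5+1\equiv 0 \pmod {f_t}$), degree $6$ plus all roots in $\Q(\alpha)$ giving a Galois extension, two non-commuting (equivalently, distinct) involutions pinning the group to $\sym_3$, and the projective rewriting $[1,\alpha,\beta]=[1/\beta,\alpha/\beta,1]$ identifying the Galois orbit with the $\sym_3$-orbit; your distinctness-by-cases is a more laborious substitute for the paper's one-line observation that $\mathrm{disc}(f_t)\neq 0$ for $t\neq 1$, but it works. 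The genuine gap is part 1). You reduce irreducibility to two assertions --- that $g_t$ has no rational root for \emph{every} rational $t\neq 1,2$, and that $-f_t(1)f_t(-1)$ is never a square in $\Q^\times$ --- and you leave both unverified while correctly flagging them as ``the delicate step''. They are not finite computations: each is a statement about the rational points of an auxiliary curve over $\Q$ (the plane curve obtained from $g_t(y)=0$ by clearing denominators, and a hyperelliptic-type curve $s^2=-f_t(1)f_t(-1)$, of the same flavour as the genus-$2$ curve the paper must handle in Theorem~\ref{T:thm2}), so your proposal replaces the problem by rational-point problems at least as hard as the one avoided. Worse, your norm criterion is only sufficient: $Y_1^2-4$ can be a non-square in the cubic field $\Q(Y_1)$ while its norm is a square in $\Q$, so even if the theorem is true the second check could fail at some $t$ and your method would be silent there.

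The paper's proof of part 1) is precisely the ``moral'' argument you mention in your last sentence and set aside as not airtight --- and it is airtight, uniformly in $t$. If $f_t$ were reducible over $\Q$ it would admit a monic irreducible factor $g$ of degree $1$, $2$ or $3$; taking a root $x$ of $g$ and $y=P(x)$, proposition~\ref{P:prop3} (an identity in $\Q(t)[X]$, verified once by computer and valid for all rational $t\neq 2$ simultaneously) gives $f_t(y)=0$ and a point $[x,y,1]\in F_5$ defined over $\Q(x)$, hence of degree at most $3$. Klassen--Tzermias excludes cubic points; in degree $1$, Wiles forces $xy=0$, impossible since $f_t(0)=1$ while $f_t(x)=f_t(y)=0$; in degree $2$, Gross--Rohrlich forces $[x,y,1]\in\lbrace P,\overline P\rbrace$, so $x$ is a primitive cube root of unity and $X^2+X+1$ divides $f_t$, whereas the Euclidean remainder of $f_t$ by $X^2+X+1$, computed once as an explicit rational function of $t$, vanishes at a rational $t$ only for $t=1$, which is excluded. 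No uniform non-squareness or no-rational-root check remains: the arithmetic of $F_5$ (Wiles, Gross--Rohrlich, Klassen--Tzermias) does exactly the work your two open conditions were meant to do, and this is the key idea your proposal is missing.
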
 

\begin{remark} \label{R:rmq2.3}  
Les six points de $F_5(K_t)$ d\'ecrits ci-dessus  forment l'ensemble des points de degr\'e $6$  dans 
 l'intersection de $F_5$ avec la  conique $C_t$ d'\'equation \eqref{(3.5)}. Le groupe $\sym_3$ agit  sur $F_5$ et  $C_t$  par permutation des coordonn\'ees. Il en r\'esulte  que l'ensemble de ces points est la $\sym_3$-orbite de chacun d'eux.
\end{remark} 
 
\begin{theorem}   \label{T:thm2} 
Soit $r$ le nombre r\'eel  tel que  $7r^5 - 10r^4 - 20r^3 - 4=0$. On a   $r\simeq 2,558$.    
 
\begin{itemize}
\item[1)]
Le corps $K_t$ est totalement r\'eel si et seulement si on a $2<t<r$. 
\item[2)]  Il existe une infinit\'e de nombres rationnels $t$  tels que $2<t<r$ et que les corps $K_t$ soient deux \`a   deux distincts.
\end{itemize} 
\end{theorem}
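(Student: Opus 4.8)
The plan is to prove Theorem~\ref{T:thm2} in two parts, treating the totally real condition first and the infinitude of distinct fields second.

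\textbf{Part 1).} Since the roots of $f_t$ come in the reciprocal pairs $\{\alpha,1/\alpha\}$, $\{\beta,1/\beta\}$, $\{\beta/\alpha,\alpha/\beta\}$ described in Theorem~\ref{T:thm1}, part 2), the field $K_t=\Q(\alpha)$ is totally real precisely when all six roots are real, equivalently when $f_t$ splits completely over $\R$. Because $f_t$ is palindromic of degree $6$, the substitution $Y=X+1/X$ reduces $f_t(X)/X^3$ to a cubic $g_t(Y)=Y^3+uY^2+(v-3)Y+(w-2u)$ in $\Q[Y]$. A root $X$ of $f_t$ is real if and only if the corresponding $Y=X+1/X$ is real with $|Y|\ge 2$; conversely, each real $Y$ with $|Y|>2$ yields two real roots $X$ and each real $Y$ with $|Y|<2$ yields two complex-conjugate roots. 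Hence $K_t$ is totally real if and only if $g_t$ has three real roots, all of absolute value at least $2$. I would analyze $g_t$ by computing its discriminant and the values $g_t(\pm 2)$ as rational functions of $t$, factoring their numerators, and locating the interval $2<t<r$ where the sign conditions guaranteeing three real roots in $[2,\infty)\cup(-\infty,-2]$ all hold. The cubic $7r^5-10r^4-20r^3-4$ defining the endpoint $r$ should appear as (a factor of) the numerator of $g_t(2)$ or of the discriminant, marking the transition where a pair of real roots collides into a complex pair or where a root crosses $Y=\pm 2$.

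\textbf{The main obstacle} is the bookkeeping in Part 1): I expect several candidate sign-change loci (the discriminant of $g_t$, the values $g_t(2)$ and $g_t(-2)$, and the denominators $t-2$ and $t^2+t-1$ that must be controlled), and I must verify that on the open interval $(2,r)$ \emph{all} three real roots of $g_t$ genuinely satisfy $|Y|\ge 2$ and not merely that $g_t$ has three real roots. Checking a single interior value such as $t=5/2$ numerically (using {\tt Pari-gp}) pins down which regime is the totally real one, and continuity together with the located boundary values then forces the conclusion $2<t<r$ for the whole interval.

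\textbf{Part 2).} For the infinitude of pairwise distinct fields $K_t$, the open interval $(2,r)$ contains infinitely many rationals, so it suffices to show that $t\mapsto K_t$ takes infinitely many distinct values. I would argue that a fixed number field $K$ can equal $K_t$ for only finitely many $t$: the curve parametrized by $t$ mapping to the generator $\alpha$ (or to the point $[\alpha,\beta,1]\in F_5$) is non-constant, so distinct fibers correspond to distinct points, and only finitely many rational $t$ can produce an $\alpha$ generating a given $K$, since each such $\alpha$ is a root of $f_t$ and the coefficients $u,v,w$ are non-constant rational functions of $t$. Equivalently, one may invoke Hilbert irreducibility or simply observe that the degree-$6$ fields $K_t$ have unboundedly varying invariants (for instance the discriminant of $f_t$, a non-constant rational function of $t$) as $t$ ranges over $(2,r)\cap\Q$, so infinitely many must be distinct.
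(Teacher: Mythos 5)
Your Part 1 is a correct route, genuinely more computational than the paper's. You reduce to the same resolvent cubic $g_t=Y^3+uY^2+(v-3)Y+(w-2u)$ that the paper uses, and the quintic does appear where you predicted: the paper computes the discriminant $\Delta=-5^2(t^4-3t^3-t^2+3t+1)^2(7t^5-10t^4-20t^3-4)(t-1)^2/\bigl((t-2)^3(t^2+t-1)^6\bigr)$. But the bookkeeping you single out as the main obstacle --- verifying that all three real roots of $g_t$ satisfy $|Y|\geq 2$ --- is dissolved by a Galois-theoretic observation you miss: by Theorem~\ref{T:thm1}, $K_t/\Q$ is Galois with group $\sym_3$, and $K_t$ is the Galois closure of the cubic field $\Q(\xi)$, namely the compositum of $\Q(\xi)$ with $\Q\bigl(\sqrt{(2-t)(7t^5-10t^4-20t^3-4)}\bigr)$ (the discriminant modulo squares). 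A Galois number field with one real embedding is totally real, so $K_t$ is totally real iff $\Delta>0$; and then $\alpha\in K_t\subset\R$ automatically, so the conditions $|\xi|>2$ hold for free, with no analysis of $g_t(\pm 2)$ needed. The condition $\Delta>0$ reads $(2-t)(7t^5-10t^4-20t^3-4)>0$, i.e.\ $2<t<r$. Your sign-locus-plus-sample-point plan would also succeed (crossings of $Y=\pm 2$ correspond to $f_t(\pm 1)=0$, which can be excluded on the relevant range), so Part 1 is sound, just heavier.

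Part 2, however, has a genuine gap. Your central claim --- ``only finitely many rational $t$ can produce an $\alpha$ generating a given $K$, since each such $\alpha$ is a root of $f_t$ and the coefficients $u,v,w$ are non-constant rational functions of $t$'' --- is a non sequitur: non-constancy of a family does not bound the number of parameters at which it acquires a root in a fixed field. (For $f_t=X^2-t$ the coefficient is non-constant, yet $\Q(\sqrt 2)$ contains a root for every $t=2m^2$.) The correct version of your observation is exactly the paper's first argument: by Remark~\ref{R:rmq4.2} the degree-$6$ points of $F_5\cap C_t$ form the fiber over $t$ of $F_5\to F_5/\sym_3\simeq\PP^1$, so distinct $t$ give distinct points of $F_5$; hence if a single field $K_{t_0}$ occurred as $K_t$ for infinitely many $t$, the set $F_5(K_{t_0})$ would be infinite --- and the contradiction comes from \emph{Faltings' theorem} (Mordell conjecture), since $F_5$ has genus $6\geq 2$. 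This finiteness theorem is the indispensable input, and you never invoke it; without it the argument proves nothing (for the genus-$1$ curve $F_3$ the same ``non-constant family'' reasoning would fail, since a single field can carry infinitely many points). Your fallbacks do not repair this: the polynomial discriminant of $f_t$ being a non-constant rational function says nothing about the field discriminant of $K_t$ (square factors and the index can absorb all the variation), and Hilbert irreducibility controls irreducibility and Galois groups of specializations, not pairwise distinctness of the specialized fields --- the set $\{t: K_t=K\}$ is not obviously thin. The paper's alternative second argument likewise routes through Faltings: since the quadratic subfield of $K_t$ is $\Q\bigl(\sqrt{(2-t)(7t^5-10t^4-20t^3-4)}\bigr)$, a field repeating infinitely often would force infinitely many rational points on the genus-$2$ curve $d_0y^2=(2-x)(7x^5-10x^4-20x^3-4)$.
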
 
On en d\'eduit l'\'enonc\'e suivant:
\begin{corollary} \label{C:cor1} Il  existe une infinit\'e de corps de nombres $K$
totalement r\'eels, galoisiens sur $\Q$ de degr\'e $6$, de groupe de Galois sur $\Q$  isomorphe \`a $\sym_3$,  tels que $F_5(K) $  poss\`ede un point non trivial.
\end{corollary}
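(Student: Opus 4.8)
Le plan consiste à déduire le corollaire \ref{C:cor1} en assemblant directement les théorèmes \ref{T:thm1} et \ref{T:thm2}; une fois ces deux énoncés acquis, le corollaire en est une conséquence quasi immédiate, sans argument nouveau.

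Je partirais de la partie 2) du théorème \ref{T:thm2}, qui exhibe une infinité de nombres rationnels $t$ vérifiant $2<t<r$ et pour lesquels les corps $K_t$ sont deux à deux distincts. Fixons un tel $t$. La condition $2<t$ entraîne en particulier $t\neq 1$ (et $t\neq 2$), de sorte que le théorème \ref{T:thm1} s'applique: sa partie 2) donne que $K_t=\Q(\alpha)$ est galoisien sur $\Q$, de degré $6$ et de groupe de Galois isomorphe à $\sym_3$, tandis que sa partie 3) fournit dans $F_5(K_t)$ des points non triviaux. Par ailleurs, puisque $2<t<r$, la partie 1) du théorème \ref{T:thm2} assure que $K_t$ est totalement réel.

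Il ne resterait plus qu'à conclure: en faisant parcourir à $t$ la famille infinie fournie ci-dessus, on obtient une infinité de corps $K=K_t$, deux à deux distincts, tous totalement réels, galoisiens sur $\Q$ de degré $6$ et de groupe de Galois isomorphe à $\sym_3$, et tels que $F_5(K)$ contienne un point non trivial; c'est exactement l'énoncé du corollaire.

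L'unique point véritablement substantiel est la distinction deux à deux des corps $K_t$, c'est-à-dire la partie 2) du théorème \ref{T:thm2}: sans elle, on ne saurait garantir que l'on obtient une infinité de corps, mais seulement l'existence d'un corps convenant. L'obstacle principal se situe donc en amont, dans la preuve de cette partie 2) (typiquement en contrôlant que la famille des $K_t$ n'est pas constante, par exemple au moyen d'un invariant arithmétique tel que le discriminant ou l'ensemble des premiers ramifiés), et non au niveau du corollaire lui-même, dont la démonstration se réduit à cet assemblage.
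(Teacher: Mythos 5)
Votre démonstration est correcte et suit exactement la voie du papier : le corollaire y est présenté comme une conséquence immédiate des théorèmes \ref{T:thm1} et \ref{T:thm2} (\og On en déduit l'énoncé suivant \fg), par le même assemblage que le vôtre, en notant que $2<t<r$ entraîne $t\neq 1$ et $t\neq 2$. Votre remarque finale, selon laquelle tout le poids repose sur la distinction deux à deux des corps $K_t$ (assertion 2 du théorème \ref{T:thm2}), est également conforme à l'esprit du texte.
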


 {\bf{Exemple.}} Prenons $t=5/2$. On a 
 $$f_t=X^6 + \frac{63}{31}X^5 - \frac{1149}{961}X^4 - \frac{4283}{961}X^3 - \frac{1149}{961}X^2 + \frac{63}{31}X + 1.$$
Le corps $K_t=\Q(\alpha)$ est totalement r\'eel et on a 
 $$\alpha^5+\beta^5+1=0\quad \text{avec}\quad \beta=\frac{2821}{89}\alpha^5+\frac{2850}{89}\alpha^4-\frac{196815}{2759}\alpha^3-\frac{188718}{2759}\alpha^2+\frac{90989}{2759}\alpha+\frac{2639}{89}.$$

On  constate sur la figure ci-dessous  que l'intersection de $F_5$ avec la conique  $C_t$
 est form\'ee de six points r\'eels, qui constituent l'orbite galoisienne et la $\sym_3$-orbite de $[\alpha,\beta,1]$. 
\bigskip

\begin{center}
\pgfplotsset{compat=1.17}
\begin{tikzpicture}
    \begin{axis}[no markers, xmin=-2,xmax=2]
    \addplot {(sqrt(9*x^2 +10*x +9)-5*(x+1))/4};
    \addplot [color=blue]{(-sqrt(9*x^2 +10*x +9)-5*(x+1))/4};
    \addplot [domain=-1:2,samples=200]{-(1+x^5)^0.2};
    \addplot [domain=-2:-1,samples=200]{abs(x^5+1)^0.2};
    \draw (-1,0.2)--(-1,0);
      \end{axis}
\end{tikzpicture}
\end{center}
\medskip

\begin{remark} Les trois autres  familles de coniques 
d\'ecrites dans le th\'eor\`eme 1 de \cite{KlassenTzermias} forment une orbite sous l'action de $\sym_3$. On peut d\'emontrer que leur intersection avec $F_5$   ne contient pas de points totalement r\'eels de degr\'e $6$. On obtient ainsi, avec les th\'eor\`emes \ref{T:thm1} et \ref{T:thm2}, une description de tous les points totalement r\'eels de degr\'e $6$ de $F_5$. 
\end{remark}

 \section{La conique $C_t/\Q$  \label{3}}
Rappelons que les points $P$ et $\overline P$  sont d\'efinis par les \'egalit\'es~\eqref{(1.1)}.   D\'ecrivons  la famille des coniques projectives planes sur $\Q$,  irr\'eductibles sur $\Q$, passant par  $P$ et ayant comme  tangente en  $P$   celle de $F_5$ en $P$. 

Soit $\calC$ une conique projective  plane d\'efinie sur $\Q$. Il existe $a,b,c,d,e,f$ dans $\Q$ tels que  $\calC$ poss\`ede une \'equation de   la forme
$$ax^2+by^2+cz^2+dxy+exz+fyz=0.$$

\begin{proposition}   \label{P:prop1}   1) Supposons que $\calC$ soit irr\'eductible sur $\Q$ et que $P$ appartienne à $\calC$. Alors, $P$ est lisse. 

2) Les deux conditions suivantes sont \'equivalentes:

\begin{itemize}
\item[2.1)] La conique $\calC$ est  irr\'eductible sur $\Q$, le point $P$ appartient \`a $\calC$ et la tangente \`a $\calC$ en $P$ est celle  de $F_5$ en $P$.
\item[2.2)] On a 
\begin{equation}
\label{(3.1)} 
a=b=c, \quad d=e=f\quad \text{et}\quad d\neq 2a.
\end{equation} 
\end{itemize} 
\end{proposition}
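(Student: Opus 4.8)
The plan is to set up explicit coordinates. The point $P=[\zeta_3,\zeta_3^2,1]$, and I first need the tangent to $F_5$ at $P$. Since $F_5$ is defined by $G=x^5+y^5+z^5$, the tangent line at $P$ is $\sum \frac{\partial G}{\partial x}(P)\,x + \cdots = 0$, i.e.\ $\zeta_3^4 x + \zeta_3^8 y + z = 0$. Using $\zeta_3^3=1$ this becomes $\zeta_3 x + \zeta_3^2 y + z = 0$. I would record this tangent line explicitly as it is the benchmark against which the tangent to $\calC$ must be compared.

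\textbf{Part 1.} To show $P$ is a smooth point of an irreducible conic $\calC$ through $P$, I would argue by contradiction: a point on a plane conic is singular if and only if it is a singular point of the quadratic form, and a plane conic with a singular point is a union of two lines (or a double line), hence reducible over $\overline\Q$. So if $P$ were singular, $\calC$ would be reducible over $\overline\Q$. The task is then to rule this out using irreducibility over $\Q$. The subtlety is that irreducibility over $\Q$ does not imply irreducibility over $\overline\Q$ in general; however, a conic that is singular at a point factors as a product of two linear forms, and if the singular point $P$ is defined over $\Q(\zeta_3)$, the two lines are defined over $\Q(\zeta_3)$ and are conjugate under $\Gal(\Q(\zeta_3)/\Q)$, which would make the conic reducible over $\Q$ — contradicting the hypothesis. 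I would need to check that the singular locus being the $\Q$-rational quadratic form forces the factorization to be $\Q$-rational, which is the cleanest way to get the contradiction.

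\textbf{Part 2.} This is a direct computation. Writing $\calC: ax^2+by^2+cz^2+dxy+exz+fyz=0$, the condition $P\in\calC$ reads $a\zeta_3^2+b\zeta_3^4+c+d\zeta_3^3+e\zeta_3^2+f\zeta_3^2=0$, which after reducing exponents mod $3$ and using $1+\zeta_3+\zeta_3^2=0$ splits into two $\Q$-linear relations among $a,b,c,d,e,f$ (the rational and the $\zeta_3$-parts). The tangent to $\calC$ at $P$ is $\sum\frac{\partial}{\partial x}(\cdots)(P)\,x+\cdots=0$; setting this line proportional to the tangent $\zeta_3 x+\zeta_3^2 y+z=0$ of $F_5$ gives further linear conditions. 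The plan is to solve this linear system over $\Q$ and check it is equivalent to $a=b=c$, $d=e=f$. Finally, the condition $d\neq 2a$ encodes irreducibility: when $a=b=c$ and $d=e=f$, the quadratic form is $a(x^2+y^2+z^2)+d(xy+xz+yz)$, whose discriminant/reducibility I would compute; it factors iff $d=2a$ (the case where it becomes $a(x+y+z)^2$-type or a product of two rational lines), so $d\neq 2a$ is exactly irreducibility.

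\textbf{Main obstacle.} The only genuinely delicate point is Part 1: translating ``singular at $P$'' into ``reducible over $\Q$'' rigorously, since one must argue that the two linear factors through a singular point defined over $\Q(\zeta_3)$ descend to a $\Q$-rational factorization. I expect the symmetry of the quadratic form under coordinate permutation, together with the Galois action $\zeta_3\mapsto\zeta_3^2$ swapping $P$ and $\overline P$, to make this descent transparent. The rest is bookkeeping with the relations $\zeta_3^3=1$ and $1+\zeta_3+\zeta_3^2=0$, which I would not grind through here.
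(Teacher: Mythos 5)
Your Part 2 is essentially the paper's computation and is fine in outline, but Part 1 --- the step you yourself flag as the main obstacle --- rests on a false implication. You claim that if $\calC$ is singular at $P$, it splits into two lines defined over $\Q(\zeta_3)$ that are conjugate under $\Gal(\Q(\zeta_3)/\Q)$, ``which would make the conic reducible over $\Q$''. The implication is exactly backwards: a product of two conjugate, individually non-rational linear forms is \emph{irreducible} over $\Q$. The pencil of this very proposition contains the counterexample: for $a=b=c=1$, $d=e=f=-1$ one gets
$x^2+y^2+z^2-(xy+xz+yz)=(\zeta_3x+\zeta_3^2y+z)(\zeta_3^2x+\zeta_3y+z)$,
which is reducible over $\overline{\Q}$, irreducible over $\Q$, and singular (at $[1,1,1]$). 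So ``singular somewhere plus irreducible over $\Q$'' is not contradictory, and your descent mechanism cannot close Part 1. Moreover, the configuration you describe is impossible anyway: the nontrivial element $\sigma:\zeta_3\mapsto\zeta_3^2$ sends lines through $P$ to lines through $\overline P\neq P$, so it cannot permute two lines meeting at $P$.

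The correct argument (the paper's conceptual version) uses precisely the fact that Galois moves $P$: since $\calC$ is defined over $\Q$, its singular locus is Galois-stable, so if $P$ were singular then so would be $\overline P$. A conic that is a union of two distinct lines has a unique singular point; hence $\calC$ would have to be a double line through both $P$ and $\overline P$, i.e.\ proportional to $(x+y+z)^2$, which is reducible over $\Q$ --- the desired contradiction. (The paper also verifies this directly: writing out $F_x(P)=F_y(P)=F_z(P)=0$ and separating the $1$- and $\zeta_3$-components via $\zeta_3^2=-1-\zeta_3$ forces $a=b=c$, $d=e=f$, $d=2a$, i.e.\ $F=a(x+y+z)^2$.) Note finally that the same over-$\Q$ versus over-$\overline{\Q}$ conflation re-enters your Part 2: ``it factors iff $d=2a$'' is false over $\overline{\Q}$, where the form $a(x^2+y^2+z^2)+d(xy+xz+yz)$ degenerates iff $(2a-d)(a+d)=0$; the case $a+d=0$ must be handled separately (it is the conjugate-lines conic above, irreducible over $\Q$ though reducible over $\overline{\Q}$), exactly as the paper does.
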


\begin{proof}  
Notons $F$ le polyn\^ome homog\`ene de degr\'e $2$ d\'efinissant  $\calC$ et $F_x$, $F_y$, $F_z$ ses polyn\^omes dériv\'es par rapport \`a $x$, $y$ et $z$.

1)  La conique  $\calC$ \'etant  d\'efinie sur $\Q$,  le point $P$ appartient \`a $\calC$ et est lisse si et seulement si tel est le cas de $\overline P$. Si $P$ \'etait singulier, $\calC$ serait donc la droite double sur $\Q$ passant par $P$ et $\overline P$, contredisant ainsi notre hypoth\`ese. V\'erifions ce fait directement. 
On a les \'egalit\'es
\begin{equation}
\label{(3.2)} 
F_x(P)=2a\zeta_3+d\zeta_3^2+e,\quad  F_y(P)=2b\zeta_3^2+d\zeta_3+f,\quad F_z(P)=2c+e\zeta_3+f\zeta_3^2.
\end{equation}
Supposons que l'on ait $F_x(P)=F_y(P)=F_z(P)=0$. 
En utilisant l'\'egalit\'e,   $\zeta_3^2=-1-\zeta_3$, on obtient les conditions $a=b=c$, $e=d=f$ et $d=2a$, d'o\`u  $F=a(x+y+z)^2$ et  l'assertion.

2) Supposons que la  condition 2.1 soit satisfaite. D'apr\`es l'assertion pr\'ec\'edente, 
 l'\'equation de la tangente \`a  $\calC$ en $P$ est 
$$F_x(P)x+F_y(P)y+F_z(P)z=0.$$
L'\'equation de la tangente   \`a $F_5$  en $P$ est 
$$\zeta_3x+\zeta_3^2y+z=0.$$
D'apr\`es  l'hypoth\`ese faite,
il existe donc $\lambda \in \overline \Q^*$ tel que 
$$\lambda (\zeta_3,\zeta_3^2,1)=(F_x(P),F_y(P),F_z(P)).$$
On obtient  $\lambda=F_z(P)$, d'o\`u 
$$\zeta_3 F_z(P)=F_x(P)\quad \text{et}\quad \zeta_3^2F_z(P) =F_y(P).$$
On en d\'eduit  avec \eqref{(3.2)} que l'on a  
$$-2a+d-e+2c=0\quad \text{et} \quad d-2e+f=0,$$
$$-d-2c+f+2b=0\quad \text{et} \quad  e-2c-f + 2b=0.$$
La diff\'erence entre les deux derni\`eres \'egalit\'es implique la relation $e-2f +d=0$. 
Avec l'\'egalit\'e $d-2e+f=0$, on obtient alors $e=f=d$, puis $a=b=c$.
De plus, on a $d\neq 2a$, sinon $F=a(x+y+z)^2$, ce qui n'est pas, d'o\`u la  condition \eqref{(3.1)}.

Inversement, supposons  que la condition 2.2 soit satisfaite. On v\'erifie que $\calC$ est  irr\'eductible sur $\overline \Q$  si et seulement si  on a $(2a-d)(a+d)\neq 0$ i.e. $a+d\neq 0$ (cf. \cite{Walker}, Chapter III,  Theorem 6.1).
Par suite, 
si $a+d\neq 0$, alors $\calC$ est en particulier irr\'eductible sur $\Q$. Si $a+d=0$,  l'\'equation de $\calC$ est $x^2+y^2+z^2-(xy+xz+yz)=0$, et on constate   avec \cite{Pari} que $\calC$ 
est irr\'eductible sur $\Q$.  Par ailleurs, on a 
$d\neq 2a$,  donc  $P$ est un point lisse de $\calC$ et  la tangente \`a  $\calC$ en $P$ est celle de $F_5$ en $P$, d'o\`u la condition 2.1.
 \end{proof} 
 
 \begin{remark} Le pinceau des  coniques d\'efinies par la condition~\eqref{(3.1)} est engendr\'e par la droite double $(x+y+z)^2$ passant par $P$ et $\overline P$, et le produit 
 $$(\zeta_3x+\zeta_3^2y+z)(\zeta_3^2+\zeta_3y+z)=x^2+y^2+z^2-(xy+xz+yz),$$
 des \'equations des tangentes  en  $P$ et $\overline P$. 
 \end{remark} 
 
 \begin{remark}
Les points $P$ et $\overline P$ \'etant conjugu\'es sur $\Q$, si la condition 2.1 est satisfaite, alors $\overline P$ est un point lisse de  $\calC$  et la tangente \`a $\calC$ en $\overline P$ est celle  de $F_5$ en $\overline P$.  
\end{remark}

La d\'etermination de l'intersection 
de $F_5$ avec  la famille de coniques  v\'erifiant la condition  \eqref{(3.1)}  fournit ainsi, g\'en\'eriquement, des points de $F_5$ rationnels sur des corps de degr\'e $6$ sur $\Q$ (\cite{KlassenTzermias}, Theorem 1). Afin de d\'emontrer les r\'esultats que l'on a en vue, on se limitera au cas o\`u $a\neq 0$, l'\'equation 
de ces coniques \'etant alors  de la forme $x^2+y^2+z^2+t(xy+xz+yz)=0$ avec $t\in \Q$ et $t\neq 2$. En fait, on  constate avec la d\'emonstration 
du th\'eor\`eme \ref{T:thm1} que, si  $t$ est distinct de $1$, 
ces coniques ont  avec $F_5$ un  contact d'ordre $2$ en $P$  et $\overline P$. 

 Pour tout nombre rationnel $t\neq 2$, notons   d\'esormais $C_t$ la conique d\'efinie sur $\Q$ d'\'equation 
\begin{equation}
 \label{(3.5)}
x^2+y^2+z^2+t(xy+xz+yz)=0.
\end{equation}

 \section{L'intersection $F_5\cap C_t$  \label{4}} 
 On v\'erifie que l'intersection de  la droite d'\'equation $z=0$ avec $F_5\cap C_t$ est vide. D\'ecrivons $F_5\cap C_t$ dans l'ouvert $z=1$. 
 
 Rappelons que, pour $t\neq 2$,  le  polyn\^ome $f_t\in \Q[X]$  est  d\'efini par l'\'egalit\'e \eqref{(2.1)} et que les nombres rationnels $a_i$ (fonctions de $t$) ont \'et\'e d\'efinis dans le 
 paragraphe~\ref{2}.   La proposition  qui suit n'est pas indispensable pour \'etablir nos r\'esultats, mais elle permet de comprendre comment l'\'enonc\'e du th\'eor\`eme~\ref{T:thm1} a \'et\'e trouv\'e. On utilisera  essentiellement  dans la suite la proposition~\ref{P:prop3} ci-dessous.
 
 \begin{proposition}   \label{P:prop2} Soit $[x,y,1]$ un point de $F_5\cap C_t$, distinct de $P$ et $\overline P$.  
On a 
\begin{equation}
 \label{(4.1)}
 f_t(x)=0\quad \text{et}\quad y=\sum_{i=0}^{5}  a_i x^i.
 \end{equation}
 \end{proposition}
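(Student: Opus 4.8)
The plan is to eliminate $y$ between the two equations. In the chart $z=1$ the point $[x,y,1]$ satisfies the Fermat relation $y^5=-(x^5+1)$ and, by \eqref{(3.5)}, the conic relation
\begin{equation*}
y^2=-t(x+1)\,y-(x^2+tx+1).
\end{equation*}
First I would use the conic relation repeatedly to reduce $y^3,y^4,y^5$ to expressions affine in $y$; this produces two polynomials $A,B\in\Q[X]$ (with coefficients rational in $t$) such that $y^5=A(x)\,y+B(x)$. Substituting into the Fermat relation gives the single linear equation $A(x)\,y+B(x)=-(x^5+1)$, whence, as soon as $A(x)\neq 0$,
\begin{equation*}
y=-\frac{x^5+B(x)+1}{A(x)}.
\end{equation*}

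To obtain $f_t(x)=0$, I would reinject this value of $y$ into the conic, or equivalently compute the resultant $\Res_Y\big(Y^5+x^5+1,\ Y^2+t(x+1)Y+x^2+tx+1\big)$, getting a polynomial $R(x)$ of degree $10$ in $x$. The structural point is that $P$ and $\overline P$ lie on $F_5\cap C_t$ and that their first coordinates $\zeta_3,\zeta_3^2$ are exactly the roots of $x^2+x+1$, so $x^2+x+1$ divides $R$; since $C_t$ is moreover tangent to $F_5$ at $P$ and $\overline P$, each contributes a double root and $(x^2+x+1)^2$ divides $R$. Comparing degrees then forces $R=c\,(x^2+x+1)^2\,f_t(x)$ for a nonzero factor $c$ depending on $t$; this factorization is the one computational identity to be checked directly with \cite{Pari} or \cite{MAGMA}. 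As $[x,y,1]\neq P,\overline P$, we have $x\neq\zeta_3,\zeta_3^2$, so $x^2+x+1\neq 0$ and therefore $f_t(x)=0$.

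Finally, for the expression of $y$, I would reduce the fraction $-(x^5+B(x)+1)/A(x)$ modulo $f_t$. Since $\pgcd(A,f_t)=1$, so that $A(x)$ is invertible in $\Q[X]/(f_t)$, this fraction has a unique representative of degree at most $5$, and the claim is that it equals $\sum_{i=0}^{5}a_ix^i$; equivalently one checks the polynomial congruence
\begin{equation*}
A(X)\Big(\sum_{i=0}^{5}a_iX^i\Big)+X^5+B(X)+1\equiv 0\pmod{f_t(X)}.
\end{equation*}
The main labour, and the only real obstacle, is carrying out these two explicit verifications (the factorization of $R$ and the congruence above), together with confirming the non-degeneracy conditions $A(x)\neq 0$, $\pgcd(A,f_t)=1$, and the non-vanishing of the denominators $s$, $t^2+t-1$, $t-2$ occurring in the $a_i$; all of these are finite computations best delegated to a computer algebra system, since the denominators make them unpleasant by hand.
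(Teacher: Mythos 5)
Your proposal follows essentially the same route as the paper: the paper likewise computes the resultant $\Res_Y$ of $X^5+Y^5+1$ and $X^2+Y^2+1+t(XY+X+Y)$, checks with \cite{Pari} the factorization $R_t=(2-t)(t^2+t-1)^2(X^2+X+1)^2f_t$ to conclude $f_t(x)=0$ for $x\neq\zeta_3,\zeta_3^2$, and then uses the conic relation to rewrite $y^5$ as a degree-one polynomial in $y$, solving linearly for $y$ and delegating the identification with $\sum_{i=0}^{5}a_ix^i$ to the computational files \cite{FK}. Your added points (the tangency explanation for the factor $(X^2+X+1)^2$, and the explicit non-degeneracy checks $A(x)\neq 0$ and $\pgcd(A,f_t)=1$ ensuring $A$ is invertible modulo $f_t$) are sound refinements of the same argument, not a different method.
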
 

 \begin{proof}  Compte tenu de l'\'equation \eqref{(3.5)}, consid\'erons  le r\'esultant  $R_t\in \Q[X]$ par rapport \`a $Y$ des  polyn\^omes de $\Q[X,Y]$
$$X^5+Y^5+1\quad \text{et}\quad X^2+Y^2+1+t(XY+X+Y).$$
 On a  l'\'egalit\'e (cf. \cite{Pari})
$$R_t=(2-t)(t^2+t-1)^2(X^2+X+1)^2f_t.$$
Le point  $[x,y,1]$ \'etant   distinct de $P$ et  $\overline P$,  $x$ n'est pas $\zeta_3$ ni  $\zeta_3^2$. On en d\'eduit que l'on a
 $$f_t(x)=0.$$
 Par ailleurs, on a  $$y^2=-1-x^2-t(xy+x+y).$$
  Cette \'egalit\'e permet d'exprimer $y^5$ comme un polyn\^ome de degr\'e $1$ en $y$, dont les coefficients d\'ependent de $x$ et $t$. En utilisant les relations
  $$x^5+y^5+1=0\quad \text{et}\quad f_t(x)=0,$$
  on constate  alors  que   $y$ v\'erifie la seconde \'egalit\'e de \eqref{(4.1)} (voir \cite{FK}), 
d'o\`u le r\'esultat. 
\end{proof} 

 \begin{proposition}  \label{P:prop3} Soit $x$ un \'el\'ement de $\overline \Q$ tel que $f_t(x)=0$. Posons 
 $$y=\sum_{i=0}^5 a_ix^i.$$
 Alors, on a $f_t(y)=0$ et le point $[x,y,1]$ appartient \`a $F_5\cap C_t$. 
 \end{proposition}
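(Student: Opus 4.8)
The plan is to reduce all three assertions to divisibility statements in $\Q(t)[X]$ and to verify them by reduction modulo $f_t$. Write $g(X)=\sum_{i=0}^{5}a_iX^{i}\in\Q(t)[X]$, so that $y=g(x)$. Since $f_t(x)=0$, a relation $h(x)=0$ with $h\in\Q(t)[X]$ holds precisely when $f_t(X)$ divides $h(X)$; thus the proposition is equivalent to the three congruences
\begin{align*}
X^{2}+g(X)^{2}+1+t\bigl(Xg(X)+X+g(X)\bigr) &\equiv 0 \pmod{f_t(X)},\\
X^{5}+g(X)^{5}+1 &\equiv 0 \pmod{f_t(X)},\\
f_t\bigl(g(X)\bigr) &\equiv 0 \pmod{f_t(X)},
\end{align*}
expressing respectively that $[x,y,1]\in C_t$, that $[x,y,1]\in F_5$, and that $f_t(y)=0$. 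First I would verify the first congruence, which is the lightest: its left-hand side has degree $10$, and its remainder modulo the degree-$6$ polynomial $f_t$ is a polynomial of degree at most $5$ whose coefficients are rational functions of $t$ that must all vanish. This computation is essentially the reverse of the one by which the $a_i$ were produced in Proposition~\ref{P:prop2}, where $y$ was solved for from the $C_t$ relation modulo $f_t$.

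For membership in $F_5$ I would avoid expanding $g(X)^{5}$ (of degree $25$) and instead exploit the $C_t$ relation just established. Writing \eqref{(3.5)} as the monic quadratic $y^{2}+t(x+1)y+(x^{2}+tx+1)=0$ in $y$, one expresses $y^{2}$, and then inductively $y^{3},y^{4},y^{5}$, as $\Q(t)[x]$-combinations of $1$ and $y$; this reduces $x^{5}+y^{5}+1$ to the form $P(x)y+Q(x)$ with $\deg P\le 4$ and $\deg Q\le 5$. Substituting $y=g(x)$ and reducing $P(X)g(X)+Q(X)+X^{5}+1$ modulo $f_t(X)$ — now a polynomial of degree at most $9$ — then gives the claim, again mirroring the argument sketched for Proposition~\ref{P:prop2}.

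For $f_t(y)=0$ I would use a symmetry shortcut. Both defining equations of $F_5$ and of $C_t$ are invariant under $x\leftrightarrow y$, so once $[x,y,1]\in F_5\cap C_t$ is known, the point $[y,x,1]$ lies there too; provided it is distinct from $P$ and $\overline P$, Proposition~\ref{P:prop2} applied with first coordinate $y$ yields $f_t(y)=0$ directly. The distinctness fails only if $x\in\{\zeta_3,\zeta_3^{2}\}$, that is, if $f_t(\zeta_3)=0$; using $\zeta_3^{3}=1$ one computes $f_t(\zeta_3)=2+w-u-v$, a rational function of $t$ that is not identically zero (for instance it is nonzero at $t=5/2$), so the shortcut is valid for all but finitely many $t$. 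For those exceptional values, and to obtain a uniform argument, one falls back on the third congruence $f_t(g(X))\equiv 0\pmod{f_t(X)}$, checked directly. I expect the only genuine obstacle to be computational bulk: the coefficients $u,v,w$ and $a_0,\dots,a_5$ are complicated rational functions of $t$, so the three reductions are unpleasant to carry out by hand and are best confirmed with the computer-algebra files of \cite{FK}; conceptually nothing is delicate except the distinctness check underlying the symmetry argument.
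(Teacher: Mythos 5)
Your proposal is correct, and in substance it coincides with the paper's proof, which consists of the single sentence \og on v\'erifie directement cet \'enonc\'e en utilisant \cite{FK}\fg{}: the statement is established there precisely by the kind of symbolic reduction modulo $f_t$ in $\Q(t)[X]$ that you describe, delegated to the Magma/Pari-gp files. Where you genuinely depart from the paper is in the third claim: instead of verifying $f_t(g(X))\equiv 0 \pmod{f_t(X)}$ by brute force (a degree-$30$ expansion), you derive $f_t(y)=0$ from the symmetry $x\leftrightarrow y$ of the equations of $F_5$ and $C_t$ together with Proposition~\ref{P:prop2} --- whose relevant half rests only on the resultant factorization, not on further computation --- and you correctly isolate the only failure case, $x\in\{\zeta_3,\zeta_3^2\}$, i.e.\ $f_t(\zeta_3)=2+w-u-v=0$, which for rational $t$ occurs only at $t=1$ (Remark~\ref{R:rmq5.1}). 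This buys a real conceptual saving at the cost of a case analysis; note moreover that your fallback computation for the exceptional values is avoidable: once the shortcut gives $f_t(g(x))=0$ for all roots $x$ for infinitely many rational $t$, and $f_t$ is separable for $t\neq 1$, the remainder of $f_t(g(X))$ modulo $f_t(X)$ is a vector of rational functions of $t$ vanishing at infinitely many values, hence identically, so the exceptional $t$ follow by specialization (the denominators $(t-2)(t^2+t-1)s$ are nonzero for every rational $t\neq 2$, which is also why the three congruences should be, and in \cite{FK} are, checked with $t$ an indeterminate). Two small repairs: your opening claim that $h(x)=0$ holds \emph{precisely} when $f_t\mid h$ should be weakened to sufficiency --- necessity would require the irreducibility of $f_t$, which is part 1 of Theorem~\ref{T:thm1} and is proved in the paper \emph{using} Proposition~\ref{P:prop3}, so invoking it here would be circular (you never actually use that direction, so no harm is done); and in the $F_5$ step, if $x^5+y^5+1=P(x)y+Q(x)$ modulo the conic relation, the polynomial to reduce is $P(X)g(X)+Q(X)$, the extra \og$+X^5+1$\fg{} in your display double-counting that term unless $Q$ denotes only the linearization of $y^5$.
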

  
  \begin{proof}  On v\'erifie  directement cet \'enonc\'e en utilisant  \cite{FK}.
  \end{proof} 
 
 \begin{remark} \label{R:rmq4.2}  D\'ecrivons g\'eom\'etriquement les points  de  $F_5\cap C_t$ distincts de $P$ et $\overline P$.  On dispose du morphisme $F_5\to F_5/\sym_3$ qui \`a tout point de $F_5$ associe sa $\sym_3$-orbite. 
Par ailleurs, l'application $\varphi : F_5/\sym_3\to \PP^1$  d\'efinie dans un ouvert convenable par l'\'egalit\'e 
 $$\varphi(\text{orbite de}\ [x,y,z])=[t,1]\quad \text{avec}\quad t=-\frac{x^2+y^2+z^2}{xy+xz+yz},$$
 se prolonge  en un morphisme de degr\'e $1$ de $F_5/\sym_3$ sur $\PP^1$, qui est 
donc un 
 isomorphisme. En particulier, si $t$ est un nombre rationnel distinct de $1$ et $2$, 
 la fibre en $[t,1]$ du morphisme $F_5\to F_5/\sym_3\simeq \PP^1$ ainsi obtenu, est $F_5\cap C_t$ priv\'e de $P$ et $\overline P$, et c'est la $\sym_3$-orbite de chacun de ses points. 
 \end{remark}

 \section{D\'emonstration du Th\'eor\`eme~\ref{T:thm1}}
\subsection{D\'emonstration de  l'assertion 1}
Supposons que $f_t$ soit divisible par un polyn\^ome unitaire $g\in \Q[X]$, irr\'eductible sur $\Q$,  de degr\'e $1$, $2$ ou  $3$. Soit $x\in \overline \Q$ une racine de $g$.
On a en particulier $f_t(x)=0$. Posons 
$$y=\sum_{i=0}^5 a_i x^i.$$
Le point $[x,y,1]$ appartient \`a $F_5$ (prop.~\ref{P:prop3}). Son corps de d\'efinition  est  $\Q(x)$. 

Il n'existe pas de points cubiques sur $F_5$ (\cite {KlassenTzermias}, Theorem 1). Par suite, le degr\'e de $g$ est $1$ ou $2$. Si $g$ est degr\'e $1$, vu que $F_5(\Q)$ est r\'eduit aux points triviaux, on a  $xy=0$. On a  $f_t(x)=f_t(y)=0$ (prop.~\ref{P:prop3}) or $f_t(0)=1$, d'o\`u une contradiction. 
Ainsi, $g$ est de degr\'e $2$.  Il en r\'esulte  que  $[x,y,1]$ est $P$ ou $\overline P$,  et donc que $x$ est  une  racine primitive cubique de l'unit\'e  (\cite{GR}, Theorem 5.1). On en d\'eduit  que l'on a 
$g=X^2+X+1$.  
Le reste de la division euclidienne de $f_t$ par $X^2+X+1$ est 
$$\frac{5(t^4 - 3t^3 + 4t^2 - 2t + 1)(t-1)}{(2-t)(t^2+t-1)^2}.$$
On obtient  $t=1$, ce qui par  l'hypoth\`ese  est exclu, d'o\`u une contradiction et le r\'esultat. 

\begin{remark}  \label{R:rmq5.1} Pour $t=1$, on a $f_t=(X^2+X+1)^3$.
\end{remark}

\subsection{D\'emonstration des  assertions 2 et 3} 
On v\'erifie   avec~\cite{FK} que l'on a l'\'egalit\'e
$$f_t=(X-\alpha)(X-\beta)(X-\beta/\alpha)(X-1/\alpha)(X-1/\beta)(X-\alpha/\beta).$$ 
Le discriminant de $f_t$ est non nul, car on a $t\neq 1$, donc $f_t$ est s\'eparable. 
On obtient ainsi l'ensemble annonc\'e des  racines de $f_t$. En particulier, on a $K_t=\Q(\alpha)$ et l'extension $\Q(\alpha)/\Q$ est galoisienne de degr\'e $6$. 

D\'emontrons  que le groupe de Galois de $K_t/\Q$ est isomorphe \`a $\sym_3$. 
Il existe $\sigma_1$ et $\sigma_2$ dans  $\Gal(K_t/\Q)$ tels que l'on ait
$$\sigma_1(\alpha)=1/\alpha\quad \text{et}\quad  \sigma_2(\beta)=1/\beta.$$
Les \'el\'ements $\sigma_1$ et $\sigma_2$ sont d'ordre $2$. V\'erifions qu'ils sont distincts, ce qui prouvera que $ \Gal(K_t/\Q)$ n'est pas cyclique, donc est isomorphe \`a $\sym_3$. D'apr\`es l'\'egalit\'e \eqref{(2.2)},  on a 
$$\sigma_1(\beta)=\sum_{i=0}^5 a_i \sigma_1(\alpha)^i=\sum_{i=0}^5 a_i(1/\alpha)^i.$$
Par ailleurs, on a   
$$\beta/\alpha=\sum_{i=0}^5 a_i(1/\alpha)^i\quad \text{et}\quad \beta/\alpha\neq 1/\beta.$$
On en d\'eduit que $\sigma_1\neq \sigma_2$, d'o\`u    la seconde assertion du th\'eor\`eme. 

V\'erifions la troisi\`eme assertion. On a $f_t(\alpha)=0$, donc  le point $[\alpha,\beta,1]$ appartient \`a $F_5(K_t)$ (prop. \ref{P:prop3}).
On d\'eduit alors de la premi\`ere assertion  que son orbite galoisienne est de cardinal $6$,  et  qu'elle est form\'ee des points d\'ecrits dans l'\'enonc\'e  
du th\'eor\`eme.  Compte tenu de la remarque~\ref{R:rmq2.3},  cela \'etablit   le r\'esultat. 


 \section{D\'emonstration du Th\'eor\`eme~\ref{T:thm2}}

\subsection{D\'emonstration de  l'assertion 1}  D'apr\`es la remarque~\ref{R:rmq5.1}, on a $K_1=\Q(\zeta_3)$. On peut donc supposer $t\neq 1$. 
 L'extension $K_t/\Q$ \'etant galoisienne de groupe de Galois isomorphe \`a $\sym_3$ (th. \ref{T:thm1}), 
  $K_t$ contient   trois corps cubiques  non galoisiens sur $\Q$. Parce que  $f_t$ est un polyn\^ome r\'eciproque, l'un d'entre eux est  
$\Q(\xi)$ o\`u $\xi=\alpha+1/\alpha$. Le polyn\^ome minimal de $\xi$ sur $\Q$ est 
$$g_t=X^3 + uX^2 + (v - 3)X  -2u + w.$$
Le discriminant $\Delta$  de $g_t$ est 
$$\Delta=-\frac{5^2(t^4 - 3t^3 - t^2 + 3t + 1)^2(7t^5 - 10t^4 - 20t^3 - 4)(t-1)^2}{(t-2)^3(t^2+t-1)^{6}}.$$
On constate que  $\Delta$ modulo $\Q^{*2}$ est $(2-t)(7t^5 - 10t^4 - 20t^3 - 4)$. Le corps  $K_t$ est donc  le compos\'e  de $\Q(\xi)$ 
et du corps quadratique 
$$\Q\left(\sqrt{(2-t)(7t^5 - 10t^4 - 20t^3 - 4)}\right).$$
 Le polyn\^ome  $g_t$  a trois racines r\'eelles si et seulement si on a  $\Delta>0$. 
 Il en r\'esulte que $K_t$ est totalement r\'eel si et seulement si on a 
$$(2-t)(7t^5 - 10t^4 - 20t^3 - 4)>0.$$
On v\'erifie directement que cette condition signifie que l'on a $2<t<r$. 

\begin{remark} D'apr\`es le th\'eor\`eme~\ref{T:thm1} et la d\'emonstration de cette assertion,  l'ensemble des points rationnels sur $\Q$ de la courbe  de genre $2$ d'\'equation $y^2=(2-x)(7x^5 - 10x^4 - 20x^3 - 4)$ est le singleton $\lbrace (2,0)\rbrace$. On peut aussi  constater  avec \cite{MAGMA} que le groupe de Mordell-Weil sur $\Q$ de la Jacobienne de cette courbe est trivial.
\end{remark}

\subsection{D\'emonstration de  l'assertion 2} Supposons qu'il n'existe qu'un nombre fini de rationnels $t\in ]2,r[$ tels que les corps $K_t$ soient deux \`a deux distincts. 
Dans ce cas, il existe un rationnel $t_0\in ]2,r[$  et une infinit\'e de $t\in ]2,r[\cap \ \Q$ tels que  l'on ait  $K_{t_0}=K_t$.  Indiquons deux arguments conduisant \`a une contradiction.

Soient $t$ et $t'$  deux nombres rationnels distincts dans $]2,r[$. 
On d\'eduit de la remarque \ref{R:rmq4.2} que  les $\sym_3$-orbites des points  de degr\'e $6$ de $F_5\cap C_t$ et $F_5\cap C_{t'}$ sont distinctes. 
Ces points sont rationnels sur $K_t$ et $K_{t'}$. 
Il en r\'esulte que $F_5(K_{t_0})$ est infini, ce qui contredit  le th\'eor\`eme  de Faltings \cite{Faltings} Satz 7.

On peut aussi proc\'eder comme suit. Soit $d_0$ l'entier sans facteurs carr\'es tel que $\Q(\sqrt{d_0})$ soit le corps quadratique contenu dans $K_{t_0}$. On a constat\'e  que 
$\Q\left(\sqrt{(2-t)(7t^5 - 10t^4 - 20t^3 - 4)}\right)$ est le corps quadratique contenu dans $K_t$. 
La courbe de genre $2$ d'\'equation
$$d_0y^2=(2-x)(7x^5 - 10x^4 - 20x^3 - 4)$$
poss\`ede donc une infinit\'e de points rationnels sur $\Q$, d'o\`u la contradiction cherch\'ee.

\end{document}